\newcolumntype{P}[1]{>{\centering\arraybackslash}p{#1}}
\newenvironment{proof*}[1][\proofname]{\par
  \pushQED{\qed}%
  \normalfont \partopsep=\z@skip \topsep=\z@skip
  \trivlist
  \item[\hskip\labelsep
        \itshape
    #1\@addpunct{.}]\ignorespaces
}{%
  \popQED\endtrivlist\@endpefalse
}
\date{}
\begin{document}

\centerline{\bf J.R.M. Antalan Research Paper}

\centerline{\bf May 28, 2019}

\centerline{} 

\centerline{} 

\centerline {\Large{\bf A Recreational Application of Two Integer Sequences}} 
\vspace{2mm}


\centerline{\Large {\bf and the Generalized Repetitious Number Puzzle}} 








\newtheorem{theorem}{Theorem}[section]
\newtheorem{lemma}[theorem]{Lemma}
\newtheorem{corollary}[theorem]{Corollary}
\newtheorem{proposition}[theorem]{Proposition}
\newtheorem{definition}[theorem]{Definition} 
\newtheorem{example}[theorem]{Example}
\newtheorem{remark}[theorem]{Remark}
\newtheorem{illustration}[theorem]{Illustration}
\newtheorem{puzzle}[theorem]{Puzzle}


\begin{abstract} 
In this article, we give a particular recreational application of the sequence A000533 and A261544 in ``The On-line Encyclopedia of Integer Sequences" (OEIS). The recreational application provides a direct extension to ``The Repetitious Number" puzzle of Martin Gardner contained in The Second Scientific American Book of Mathematical Puzzles and Diversions published in 1961. We then provide a generalization to the repetitious number puzzle and give a related puzzle as an illustrative example. Finally, as a consequence of the generalization, we define a family of sequence in which the sequences A000533 and  A261544 belong.           
\end{abstract} 
{\small{{\bf Keywords:} Integer Sequence$\cdot$ Repetitious Number$\cdot$ Generator$\cdot$ Length$\cdot$ Replication Number$\cdot$ $(l,r)$ co-divisor$\cdot$ $(l,r)$ co-divisor number$\cdot$ $(l,r)$ co-divisor sequence.\\
   {\bf AMS Classification Numbers:} 00A08, 11A05}}
\hrule
\vspace{4mm}

\begin{flushleft}
{\footnotesize{{\bf Author Information:}
\underline{John Rafael M. Antalan} 
\bigskip

(Assistant Professor, Department of Mathematics and Physics, College of Arts and Sciences, Central Luzon State University, 3120), Science City of Mu\~{n}oz, Nueva Ecija, Philippines.
\bigskip

e-mail: jrantalan@clsu.edu.ph   
}}
\end{flushleft}

\newpage
\section{Introduction}

We begin this section by giving a brief introductory information on the {\it On-line Encyclopedia of Integer Sequences} or OEIS. We then discuss the two integer sequences under study. Finally, we present the {\it Repetitious Number Puzzle} of Gardner that will serve as the ``source" of the recreational application.     

\subsection{The OEIS}

The OEIS (available at {\color{blue}{\it https://oeis.org/}}) is an on-line collection of over quarter-million number sequences initiated by Neil J.A. Sloane in early 1964 \cite{Sloane}. 
\bigskip

OEIS aims are (based on \cite{Sloane2}): 
\begin{enumerate}
\item To allow mathematicians or other scientists to find out if some sequence that turns up in their research has ever been seen before. If it has, they may find that the problem they're working on has already been solved, or partially solved, by someone else. 
Or they may find that the sequence showed up in some other situation, which may show them an unexpected relationship between their problem and something else. 
\item To have an easily accessible database of important, but difficult to compute, sequences.
\end{enumerate}

We illustrate the first aim using the paper of Rabago and Tagle in \cite{Rabago}. Their paper aims to find the integral dimensions of a rectangular prism (i.e. length, width and height) in which the surface area and the volume are numerically equal. The solution to their problem written in lexicographic order is surprisingly the sequence A229941 \cite{Lex} in the OEIS which gives a way for three regular polygons to snugly meet at a point.      
\\

For the second aim, a particular example of important but difficult to compute sequence in the OEIS is the sequence of {\it Mersenne primes} \cite{Sloane3}. The existence of which is equivalent to the existence of an even {\it perfect number} and the largest known {\it prime number}\ \cite{Gimps}.

\subsection{Two Integer Sequences in the OEIS}

We now turn our attention to two particular sequence in the OEIS. They are the sequences A000533 and A261544.  
\bigskip

The sequence A000533 \cite{Sloane4} in the OEIS is the sequence defined by   
\begin{align*}
\begin{split}
a(0)&=1\\
a(n)&=10^{n}+1, \hspace{5mm} n\geq 1.
\end{split}
\end{align*}

Its first 15 terms are:

\begin{center}
1, 11, 101, 1001, 10001, 100001, 1000001, 10000001, 100000001, 1000000001, 10000000001, 100000000001, 1000000000001, 10000000000001, 100000000000001,$\ldots$ . 
\end{center}

Daniel Arribas verified that ``$a(1)=11$ and $a(2)=101$ are the only prime terms of the sequence up to $n=100\ 000$" \cite{Sloane4}. Also, it is unknown whether there are other prime terms in the sequence.
\bigskip

On the otherhand, the sequence A261544 \cite{Gutko} is the sequence defined by 
\begin{equation*}
b(n)=\sum_{k=0}^n{1000^k}.
\end{equation*}  

Its first 10 terms are:

\begin{center}
1, 1001, 1001001, 1001001001, 1001001001001, 1001001001001001, 1001001001001001001, 1001001001001001001001, 1001001001001001001001001, 1001001001001001001001001001,$\ldots$ .
\end{center}

It can be verified that unlike the first sequence, the terms of this sequence are all composite except for the zeroth term ``1". A complete solution to this claim may be viewed in \cite{Gardiner}.    
\bigskip

With the two sequences in the OEIS already introduced, we are now ready to consider the {\it Repetitious Number Puzzle}. 

\subsection{The Repetitious Number Puzzle}

In \cite{Gardner}, Martin Gardner presented the puzzle given below:\\

\begin{displayquote} `` {\bf The Repetitious Number}. An unusual parlor trick is performed as follows. Ask spectator A to jot down any three-digit number, and then to repeat the digits in the same order to make a six-digit number (e.g., $394\ 394$). With your back turned so that you cannot see the number, ask A to pass the sheet of paper to spectator B, who is requested to divide the number by 7.

 Dont worry about the remainder, you tell him, because there won't be any. B is surprised to discover that you are right (e.g., $394\ 394$ divided by 7 is $56\ 342$). Without telling you the result, he passes it on to spectator C, who is told to divide it by 11. Once again you state that there will be no remainder, and this also proves correct ($56\ 342$ divided by 11 is $5\ 122$).

With your back still turned, and no knowledge whatever of the figures obtained by these computations, you direct a fourth spectator D, to divide the last result by 13. Again the division comes out even ($5\ 122$ divided by 13 is 394). This final result is written on a slip of paper which is folded and handed to you. Without opening it you pass it on to spectator A.

Open this, you tell him, and you will find your original three-digit number.

Prove that the trick cannot fail to work regardless of the digits chosen by the first spectator."\end{displayquote}

This puzzle was originally written by Yakov Perelman in \cite{Perelman}.\\  

In section 3, we discuss the solution of the puzzle and state some important questions necessary for its extension. At the moment, we discuss some important notations and mathematical concepts needed in understanding the solution of the puzzle and its generalization in the next section.    

\section{Preliminaries}

\subsection{Some Terms and Notations}

The following terms will be encountered in the succeeding sections of this article.

\begin{definition}
Let $n=d_1d_2\ldots d_kd_1d_2\ldots d_k\ \ldots\ d_1d_2\ldots d_k$ be a positive repetitive integer. We say that the positive integer $g=d_1d_2\ldots d_k$ is a {\bf generator} of $n$ if $g$ is a positive integer such that replicating $g$ a finite number of times generates $n$. 
\end{definition}

\begin{definition}
Let $g=d_1d_2\ldots d_k$ be a generator of $n$. Then the {\bf length} of $g$ denoted by $l(g)$ is the number of digits in $g$.     
\end{definition}

\begin{definition}
Let $g=d_1d_2\ldots d_k$ be a generator of $n$. The {\bf replication number} of $g$ denoted by $r(g)$ is the number of replication performed in $g$ in order to generate $n$.    
\end{definition}

To fully understand the concepts being discussed, we consider some examples.

\begin{example}   
Consider the positive repetitive integer $n_1=394\ 394$ in the {\it Repetitious Number Puzzle}. The positive integer $g_1=394$ is a generator for $n_1$ with {\it length} $l(g_1)=3$ and {\it replication number} $r(g_1)=2$. 
\end{example}

\begin{example}
The positive repetitive integer $n_2=111\ 111$ is generated by $g_2=1$ with {\it length} $l(g_2)=1$ and {\it replication number} $r(g_2)=6$. The integers 11, 111, and 111\ 111 are the other generators of $n_2$.  
\end{example}
 
\begin{example}
The positive integer $n_3=223\ 344$ generates itself with {\it length} $6$ and {\it replication number} $1$.     
\end{example}

\begin{remark}
We emphasize that a generator is not unique. Also, for any positive integer $n$, $n$ is a generator of itself. Moreover, if $n$ is not repetitive, then its generator is unique.
\end{remark}

\begin{remark}
 If $g$ generates $n$ with replication number $r$, we write $n=g_r$.   
\end{remark}

\subsection{Essential Mathematical Concepts}

For completeness, we recall some essential concepts in elementary Number Theory.

\begin{definition}
Let $a$ and $b$ be two positive integers such that $a\leq b$. We say that $a$ {\bf divides} $b$ written in symbol by $a\mid b$ if there is a positive integer $c$ such that \begin{equation} b=ac.\label{e1}\end{equation} If there is no positive integer $c$ that satisfies equation (\ref{e1}), then we say that $a$ {\bf does not divides} $b$ and this situation is denoted by $a\nmid b$. If $a\mid b$, we can also say the following:
\bigskip

 (i) $b$ is a multiple of $a$, (ii) $a$ is a divisor of $b$  and (iii) $a$ is a factor of $b$.    
\end{definition}

\begin{example}
Let us consider the positive integer $394\ 394$. Note that $7$ divides $394\ 394$ since 
\begin{equation*}
394\ 394=7\times 56\ 342.
\end{equation*} 
However, $5$ does not divides  $394\ 394$ since we cannot find any positive integer $c$ that can satisfy the equation
\begin{equation*}
394\ 394=5\times c.
\end{equation*}    
\end{example}

The property of divisibility given below is important.

\begin{lemma}
Let $a,b$ and $D$ be positive integers such that $D\leq a$ and $D\leq b$. If $D\mid a$ and $D\mid b$, then $D\mid (ax+by)$ for any positive integers $x$ and $y$.
\end{lemma}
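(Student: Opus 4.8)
The plan is to argue directly from the definition of divisibility given earlier (the one built around equation~(\ref{e1})), since the statement is precisely a linearity-of-divisibility fact. First I would unpack the two hypotheses: because $D\mid a$, the definition supplies a positive integer $c_1$ with $a=Dc_1$, and because $D\mid b$ it supplies a positive integer $c_2$ with $b=Dc_2$. The whole proof then reduces to a substitution followed by factoring out $D$.

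Concretely, for any positive integers $x$ and $y$ I would compute
\begin{equation*}
ax+by=(Dc_1)x+(Dc_2)y=D(c_1x+c_2y),
\end{equation*}
and then set $c=c_1x+c_2y$. To conclude $D\mid(ax+by)$ I must check that this $c$ meets the requirements of the definition, namely that $c$ is a positive integer. This holds because $c_1,c_2,x,y$ are all positive integers, so their products and sum are again a positive integer; hence $ax+by=Dc$ exhibits the required witness and $D\mid(ax+by)$.

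The only point that requires a little care, rather than a genuine obstacle, is that the divisibility definition in this paper is stated for a pair $a\le b$, so I should confirm the ordering hypothesis is satisfied in the conclusion. Since $x\ge 1$ and $y\ge 1$, we have $ax+by\ge a+b\ge D$ (using $D\le a$), so $D\le ax+by$ and the definition applies without issue. I expect no substantive difficulty here: the result is the elementary statement that any positive-integer combination of multiples of $D$ is again a multiple of $D$, and the entire argument is a single factorization verified against the definition.
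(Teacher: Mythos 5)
Your proof is correct and is exactly the argument the paper has in mind: the paper does not write out a proof of this lemma, remarking only that it ``follows directly from the definition of divisibility,'' and your factorization $ax+by=D(c_1x+c_2y)$ with the positive-integer witness $c=c_1x+c_2y$ is precisely that standard argument. Your extra check that $D\leq ax+by$, so the paper's ordering convention in the definition of divisibility is satisfied, is a careful touch the paper glosses over.
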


The proof of \underline{Lemma 2.11} follows directly from the definition of divisibility and is standard in any elementary Number Theory textbooks. Letting $x=y=1$ we arrive at the the corollary given below.

\begin{corollary}
Let $a,b$ and $D$ be positive integers such that $D\leq a$ and $D\leq b$. If $D\mid a$ and $D\mid b$, then $D\mid (a+b)$.
\end{corollary}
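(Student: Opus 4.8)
The plan is to derive this statement as an immediate specialization of Lemma 2.11, exactly as the surrounding text anticipates. First I would recall that Lemma 2.11 guarantees $D\mid(ax+by)$ for \emph{any} positive integers $x$ and $y$, under the hypotheses $D\mid a$ and $D\mid b$ together with the size conditions $D\le a$ and $D\le b$. These are precisely the hypotheses of the corollary, so no additional setup is required; the only task is to select a convenient pair $(x,y)$.

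Next I would substitute $x=1$ and $y=1$ into the conclusion of Lemma 2.11. This yields $D\mid(a\cdot 1+b\cdot 1)$, which is exactly $D\mid(a+b)$, completing the argument. The single point to verify is that this specialization is admissible, namely that $1$ qualifies as a positive integer in the lemma, which it plainly does.

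For completeness I would also record a self-contained route that avoids quoting the lemma at all: by the definition of divisibility, the hypotheses $D\mid a$ and $D\mid b$ supply positive integers $c_1$ and $c_2$ with $a=Dc_1$ and $b=Dc_2$, whence $a+b=D(c_1+c_2)$. Since $c_1+c_2$ is again a positive integer, the definition of divisibility gives $D\mid(a+b)$. There is no genuine obstacle here, as the result is a one-line corollary; the main point is simply to confirm that the chosen substitution meets the positivity requirement of the ambient lemma.
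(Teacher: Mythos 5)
Your proposal is correct and matches the paper's own derivation: the paper obtains this corollary precisely by setting $x=y=1$ in Lemma 2.11, which is your primary argument. Your supplementary direct proof from the definition of divisibility (writing $a=Dc_1$, $b=Dc_2$, so $a+b=D(c_1+c_2)$) is also valid, though not needed.
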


\begin{remark}
The property of divisibility stated in \underline{Corollary 2.12} can be easily extended into a finite number of multiples. Given $D\mid a$ and $D\mid b$, by \underline{Corollary 2.12} we have $D\mid(a+b)$. Now, if $D\mid c$ given $D\mid (a+b)$ applying \underline{Corollary 2.12} once more gives $D\mid((a+b)+c)$ or $D\mid(a+b+c)$.   
\end{remark} 

If $b$ is divided by $a$ then either $a\mid b$ or $a\nmid b$. In both cases however, we may write $b$ in terms of $a$; this is guaranteed by the next theorem.

\begin{theorem}[{\bf Division Algorithm}]
Given integers $a$ and $b$ with $a>0$, there are unique integers $q$ and $r$ satisfying
\begin{equation*}
b=qa+r, \ \ \ \ \ \ \ \ \ \ 0\leq r<a.
\end{equation*}
\end{theorem}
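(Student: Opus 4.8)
The plan is to prove the two assertions of the theorem separately, namely the \emph{existence} of integers $q$ and $r$ and then their \emph{uniqueness}. For existence I would appeal to the Well-Ordering Principle, which guarantees that every nonempty set of nonnegative integers has a least element; uniqueness will instead follow from a short divisibility argument of exactly the flavor already recorded in \underline{Lemma 2.11} and \underline{Corollary 2.12}.

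First I would form the set
\[
S = \{\, b - ka \ : \ k \in \mathbb{Z}, \ b - ka \geq 0 \,\}.
\]
Before the Well-Ordering Principle can be applied, I must check that $S$ is nonempty. Since $a > 0$, choosing $k = -|b|$ gives $b - ka = b + |b|a \geq 0$ (one checks this separately in the cases $b \geq 0$ and $b < 0$), so $S$ contains at least one element. Let $r$ be the least element of $S$ and let $q$ be the corresponding integer, so that $r = b - qa$, i.e. $b = qa + r$ with $r \geq 0$ by construction. The key step is to show $r < a$: if instead $r \geq a$, then $b - (q+1)a = r - a \geq 0$ would also belong to $S$ yet be strictly smaller than $r$, contradicting the minimality of $r$. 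Hence $0 \leq r < a$, which completes the existence part.

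For uniqueness I would assume two representations $b = qa + r = q'a + r'$ with $0 \leq r < a$ and $0 \leq r' < a$. Subtracting yields $a(q - q') = r' - r$, so $a$ divides the difference $r' - r$. Because both remainders lie in the interval $[0,a)$, their difference satisfies $|r' - r| < a$, and the only integer multiple of $a$ with absolute value strictly less than $a$ is $0$; therefore $r' = r$, and then $a(q - q') = 0$ with $a > 0$ forces $q' = q$. The main obstacle I expect is not the algebra but the careful treatment of the hypothesis that $b$ is an \emph{arbitrary} integer (possibly negative), which is precisely what makes the nonemptiness of $S$ require a brief verification rather than being immediate; once $S$ is known to be nonempty, the Well-Ordering Principle does the decisive work and the remaining steps are routine.
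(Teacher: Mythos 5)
Your proof is correct in all its steps: the set $S$ is nonempty by your choice $k=-|b|$ (since $a\geq 1$ gives $b+|b|a\geq b+|b|\geq 0$), minimality of $r$ forces $r<a$, and the uniqueness argument via $a\mid(r'-r)$ with $|r'-r|<a$ is sound. Note, however, that the paper itself offers no proof of this theorem at all — like the Fundamental Theorem of Arithmetic (\underline{Theorem 2.18}), it is stated as standard background material with the reader implicitly deferred to elementary number theory textbooks such as Burton; your well-ordering argument for existence plus the divisibility argument for uniqueness is precisely the canonical proof found in those references, so you have simply supplied the standard argument the paper chose to omit.
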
 

\begin{remark}
The integers $q$ and $r$ are respectively called the {\bf quotient} and the {\bf remainder} of $b$ upon division by $a$. Note also that $a\mid b$ if and only if $r=0$, and that $a\nmid b$ if and only if otherwise. 
\end{remark}

\begin{definition}
A positive integer $p>1$ is said to be a {\bf prime number} if its only positive divisors are $1$ and  $p$ itself. 
\end{definition}

\begin{example}
The positive integers $7,11$ and $13$ are all prime numbers; since their only positive divisors are $1$ and their selves.While the positive integer $394\ 394$ is not a prime number; since from \underline{Example 2.10}, we know that not only the positive integers $1$ and $394\ 394$ divides $394\ 394$ but also the integer $7$.    
\end{example}

\begin{theorem}[{\bf  Fundamental Theorem of Arithmetic}]
Every positive integer $n>1$ is either a prime or a product of primes; this representation is unique, apart from the order in which the factors occur.  
\end{theorem}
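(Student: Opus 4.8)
The plan is to prove the two assertions of the theorem separately: first the \emph{existence} of a prime factorization, and then its \emph{uniqueness} up to reordering. For existence, I would argue by strong induction on $n$ (equivalently, by the well-ordering principle). The base case $n=2$ is immediate, since $2$ is prime. For the inductive step, assume every integer $m$ with $2\leq m<n$ is either prime or a product of primes. If $n$ itself is prime, there is nothing to prove. Otherwise $n$ is composite, so by \underline{Definition 2.16} it admits a divisor other than $1$ and $n$; hence we may write $n=ab$ with $1<a<n$ and $1<b<n$. By the induction hypothesis each of $a$ and $b$ is prime or a product of primes, and concatenating their factorizations expresses $n$ as a product of primes. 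This settles existence.

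The uniqueness half is the substantive part of the argument, and its crux is \emph{Euclid's lemma}: if a prime $p$ divides a product $ab$, then $p\mid a$ or $p\mid b$. Before I can invoke this, I would need to develop a little more machinery than the excerpt currently provides---namely the greatest common divisor and B\'ezout's identity, both of which follow from repeated application of the \underline{Division Algorithm} (\underline{Theorem 2.14}). Given B\'ezout's identity, Euclid's lemma follows quickly: if $p\nmid a$, then $\gcd(p,a)=1$, so $1=px+ay$ for suitable integers $x,y$; multiplying by $b$ and combining $p\mid pb$ with $p\mid ab$ via \underline{Lemma 2.11} forces $p\mid b$. A routine induction then extends this to the statement that if a prime divides a product of several integers, it must divide at least one of the factors.

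With Euclid's lemma in hand, I would prove uniqueness by induction on $n$ (or, equivalently, on the number of prime factors). Suppose $n=p_1p_2\cdots p_s=q_1q_2\cdots q_t$ are two prime factorizations. Since $p_1$ divides the right-hand product, Euclid's lemma forces $p_1\mid q_j$ for some $j$; as $q_j$ is prime, this gives $p_1=q_j$. Cancelling this common factor reduces both sides to a strictly smaller integer, to which the induction hypothesis applies, yielding $s=t$ together with a matching of the remaining primes after a suitable reordering. The main obstacle, then, is not the inductive bookkeeping but the preliminary development of B\'ezout's identity needed to justify Euclid's lemma; once that is available, everything else is a straightforward application of induction and the divisibility facts already recorded in \underline{Lemma 2.11} and \underline{Corollary 2.12}.
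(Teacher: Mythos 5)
Your proposal is correct, but note that the paper itself offers no proof of this theorem at all: it simply remarks that the result is standard and defers to Burton's textbook \cite{Burton}. The argument you outline---strong induction for existence, and uniqueness via Euclid's lemma, itself derived from B\'ezout's identity and the \underline{Division Algorithm} (\underline{Theorem 2.14})---is precisely the standard proof found in such references, so your approach matches the one the paper implicitly relies on. You are also right to flag that the paper's preliminaries (\underline{Lemma 2.11}, \underline{Corollary 2.12}, which concern only positive integer combinations) are not sufficient to carry out the uniqueness half; the gcd and B\'ezout machinery you mention would genuinely have to be added, which is presumably why the author chose to cite a textbook rather than include the proof.
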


\underline{Theorem 2.18} is a well known result in elementary Number Theory, its proof is included in most of elementary Number Theory textbooks. Consider Burton in \cite{Burton} for instance.  

\begin{illustration}
Let us consider the positive integer $1\ 001$. From \underline{Theorem 2.18}, either $1\ 001$ is a prime or a product of primes. The latter holds true since \begin{equation*}1001=7\times 11\times 13.\end{equation*}
\end{illustration}

After a brief recall in some essential results in Elementary Number Theory, we are now ready to present the solution of the ``Repetitious Number Puzzle".

\subsection{Solution of the Repetitious Number Puzzle}

The solution discussed in this subsection is due to the solution presented by Gardner in \cite{Gardner}.\\

Any three digit number takes the form $d_1d_2d_3$ where $d_1,d_2$ and $d_3$ are non-negative integers  with bounds
\begin{equation*}
0<d_1\leq 9
\end{equation*}
\begin{equation*}
0\leq d_2\leq 9
\end{equation*}
\begin{equation*}
0\leq d_3\leq 9
\end{equation*}

Repeating the digits in the same order yields the six-digit integer $d_1d_2d_3d_1d_2d_3$. This integer can be factored into $1\ 001\times d_1d_2d_3$ as shown in the following computation

\begin{center}
\begin{tabular}{c@{\,}c@{\,}c@{\,}c@{\,}c@{\,}c@{\,}c@{\,}c}
&&&&$1$&$0$&$0$&$1$\\
$\times$ &&&&&$d_1$&$d_2$&$d_3$\\
\hline
&&&&$d_3$&$0$&$0$&$d_3$\\
&&&$d_2$&$0$&$0$&$d_2$&\\
$+$&&$d_1$&$0$&$0$&$d_1$&&\\
\hline
&&$d_1$&$d_2$&$d_3$&$d_1$&$d_2$&$d_3$.
\end{tabular}
\end{center}

Thus, $d_1d_2d_3$ and $1\ 001$ divides  $d_1d_2d_3d_1d_2d_3$ and that  $d_1d_2d_3d_1d_2d_3=1\ 001\times d_1d_2d_3$. From \underline{Illustration 2.19}, $1\ 001$ can be expressed as a product of primes $7,11$ and $13$. Hence $7,11$ and $13$ divides $d_1d_2d_3d_1d_2d_3$ and that  
\begin{equation*}
d_1d_2d_3d_1d_2d_3=7\times 11\times 13\times d_1d_2d_3.
\end{equation*}

In lieu of \underline{Theorem 2.14}, we have 
\begin{equation*}
{d_1d_2d_3d_1d_2d_3}=7\times (11\times 13\times d_1d_2d_3)+0.
\end{equation*}
So, dividing the integer $d_1d_2d_3d_1d_2d_3$ by $7$ gives the integer $11\times 13\times d_1d_2d_3$ with remainder $0$.\\

Next, we consider the integer $11\times 13\times d_1d_2d_3$. In lieu of \underline{Theorem 2.14}, we have  
\begin{equation*}
11\times 13\times d_1d_2d_3=11\times (13\times d_1d_2d_3)+0.
\end{equation*}
So, dividing the integer $11\times 13\times d_1d_2d_3$ by $11$ gives the integer $13\times d_1d_2d_3$ with remainder $0$.\\

Finally, we consider the integer  $13\times d_1d_2d_3$ . Note that this integer can be written as

\begin{equation*}
13\times d_1d_2d_3=13\times (d_1d_2d_3)+0.
\end{equation*}
So, dividing the integer $13\times d_1d_2d_3$ by $13$ gives the integer $ d_1d_2d_3$ with remainder $0$.\\

Hence, dividing the six-digit repetitive number $d_1d_2d_3d_1d_2d_3$ in succession by the integers $7,11$ and $13$ returns the repetitive number into its generator $d_1d_2d_3$. This solves the puzzle.

\begin{remark}
The order of dividing the integer $d_1d_2d_3d_1d_2d_3$ by the integers $7,11$ and $13$ do not matter in the puzzle. For $d_1d_2d_3d_1d_2d_3$ can be written as \begin{center} $7\times (11\times 13\times d_1d_2d_3)$, $7\times (13\times 11\times d_1d_2d_3)$ $11\times (7\times 13\times d_1d_2d_3)$, $11\times (13\times 7\times d_1d_2d_3)$, $13\times (7\times 11\times d_1d_2d_3)$ and $13\times (11\times 7\times d_1d_2d_3)$. \end{center}    
\end{remark}

We now present our results in the next section. 

\section{Results}

\subsection{Recreational Application of the Sequence A000533}

The goal of this subsection is to show that the $k^{th}$ term $a(k)$ of the sequence  A000533 divides the $2k-digit$ repetitive number $n$  generated by $g$ with $l(g)=k$. Hence, when a $k-digit$  number $g$ is duplicated resulting to $n$, dividing $n$ with the prime factors of $a(k)$ gives the original number $g$. This result is due to
\begin{theorem}
Let $n=(d_1d_2\ldots d_k)_2$ be a repetitive number generated by $g=d_1d_2\ldots d_k$ of length $k$. Then there is a finite sequence of divisors $D_i$ such that $n$ upon division by all  of $D_i$ becomes $g$.   
\end{theorem}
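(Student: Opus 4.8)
The plan is to reduce the entire statement to one algebraic identity: duplicating the generator $g$ is the same as multiplying it by the $k$-th term $a(k)=10^{k}+1$ of the sequence A000533. First I would express $n$ in terms of $g$ by positional notation. Since $n=(d_1d_2\ldots d_k)_2$ places one copy of $g=d_1d_2\ldots d_k$ in the $k$ high-order digit positions and a second copy in the $k$ low-order positions, shifting the upper copy up by $k$ places multiplies it by $10^{k}$, so that
\begin{equation*}
n=g\cdot 10^{k}+g=g\,(10^{k}+1)=g\cdot a(k).
\end{equation*}
This is precisely the general-$k$ version of the column-multiplication display used for $k=3$ in the solution of the Repetitious Number Puzzle: the two copies of $g$ occupy the disjoint blocks of positions $0,\ldots,k-1$ and $k,\ldots,2k-1$, so no carrying occurs between them and the digits concatenate to give exactly $d_1d_2\ldots d_kd_1d_2\ldots d_k=n$.

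With this identity established, the remainder is bookkeeping. By the Fundamental Theorem of Arithmetic (\underline{Theorem 2.18}) I would factor $a(k)$ into primes, say $a(k)=p_1p_2\cdots p_m$, and take the finite sequence of divisors to be $D_i=p_i$ for $1\le i\le m$; for $k=3$ this is exactly $7,11,13$. Then $n=p_1\bigl(p_2\cdots p_m\,g\bigr)$, so by the Division Algorithm (\underline{Theorem 2.14}) dividing $n$ by $p_1$ leaves remainder $0$ and quotient $p_2\cdots p_m\,g$. Iterating, the $i$-th division strips off the factor $p_i$ with remainder $0$, and after the $m$-th division the quotient is exactly $g$. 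This produces a finite sequence $D_1,\ldots,D_m$ with the asserted property.

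I do not anticipate a genuine obstacle. The two points needing care are (i) justifying the positional-notation identity for arbitrary $k$ rather than merely illustrating it, which reduces to the observation that the two copies of $g$ never share a digit position and hence cannot interact by carrying, and (ii) confirming that every intermediate quotient is a positive integer so the Division Algorithm applies at each stage, which is immediate from $n=g\cdot a(k)$ and the chosen factorization. One could even collapse the sequence to the single divisor $D_1=a(k)$, but using the prime factors $p_i$ is what preserves the ``parlor-trick'' character of the original puzzle, where one divides in turn by several small numbers.
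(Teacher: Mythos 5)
Your proof is correct and takes essentially the same route as the paper's: your decomposition $n = g\cdot 10^{k} + g = g\,(10^{k}+1) = g\cdot a(k)$ is exactly the paper's column-addition of $d_1d_2\ldots d_k\,(0)_k$ and $d_1d_2\ldots d_k$ written algebraically, followed by the same appeal to the Fundamental Theorem of Arithmetic to take the prime divisors of $a(k)$ as the finite sequence $D_i$. Your explicit iteration of the Division Algorithm to verify each intermediate quotient only makes precise a step the paper leaves implicit in this proof (it is spelled out in the paper's Section 2.3 solution for the case $k=3$).
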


\begin{proof}
Given a repetitive number $n=(d_1d_2\ldots d_k)_2$, we express it as a sum of two positive integers both divisible by $g=d_1d_2\ldots d_k$. In particular $n$ can be expressed as the sum

\begin{center}
\begin{tabular}{c@{\,}c@{\,}c@{\,}c@{\,}c@{\,}c@{\,}c@{\,}c@{\,}c}
&$d_1$&$d_2$&$\ldots$&$d_k$&$0$&$0$&$\ldots$&$0$\\
$+$ &&&&&$d_1$&$d_2$&$\ldots$&$d_k$\\
\hline
&$d_1$&$d_2$&$\ldots$&$d_k$&$d_1$&$d_2$&$\ldots$&$d_k$.
\end{tabular}
\end{center}
Note that since $g\mid g$ and $g\mid d_1d_2\ldots d_k\underbrace{00\ldots 0}_\text{k-zeros}$, by \underline{Corollary 2.12} we have \begin{center}$g\mid  (g+d_1d_2\ldots d_k\underbrace{00\ldots 0}_\text{k-zeros})$.\end{center} But $g+d_1d_2\ldots d_k\underbrace{00\ldots 0}_\text{k-zeros}=n$. So, $g\mid n$.\\

After factoring out the common factor $g$ in both summands we have
\begin{align*}
 n&=g\times (1+1\underbrace{00\ldots 0}_\text{k-zeros})\\
&=g\times (1\underbrace{00\ldots 0}_\text{$k-1$-zeros}1)\\
&=g\times a(k).
\end{align*}   

By the Fundamental Theorem of Arithmetic (\underline{Theorem 2.18}), $a(k)$ is either a prime or a product of primes. If $a(k)$ is prime, then the finite sequence of divisors to be divided to $n$ to become $g$ is $a(k)$ itself. If $a(k)$ is non-prime then the finite sequence of divisors to be divided to $n$ to become $g$ is the finite sequence whose terms are the prime divisors of  $a(k)$.       
\end{proof}

The proof of \underline{Theorem 3.1} gives us a method on solving a particular extension of the {\it repetitious number puzzle}.\\
 
{\bf Problem.} Suppose that in the {\it repetitious number puzzle} spectator A was asked to write down any $k-$digit positive integer. To what sequence of prime numbers does the resulting $2k-$digit number be divided in order to return to the original $k-$digit number?\\    

{\bf Solution.} Let $g$ be the $k-$digit positive integer and let $n$ be the resulting $2k-$digit number. Note that $n$ is a repetitive number generated by $g$ of length $k$ with replication number $2$. That is, $n=g_2$ with $l(g)=k$. Using the result contained  in the proof of \underline{Theorem 3.1}, we must divide $n$ by the prime divisors of  $1\underbrace{00\ldots 0}_\text{k-1-zeros}1$, or the $k^{th}$ term of the sequence A000533 in order to return to the original number $g$. 

\begin{illustration}
Suppose that spectator $A$ wrote the number $g=451\ 220\ 125$. Duplicating $g$ gives the number $n=451\ 220\ 125\ 451\ 220\ 125$. Dividing $n$ by the numbers $7,11,13,19$ and $52\ 579$, the prime divisors of  $1\ 000\ 000\ 001$ which is the $9^{th}$ term of the sequence A000533 (See Table 1), gives the original number $g=451\ 220\ 125$.   
\end{illustration}

\begin{sidewaystable}
\begin{center}
\captionof{table}{\bf Prime factorization of the first 25 terms of the sequence A000533 (Generated using Wolfram Alpha\cite{WA})}
\begin{tabular}{ |c|c|c|c| }
\hline
No. of Digits ($k$) & Rep. No. ($r$) & Terms of Sequence A000533 & Prime Factorization\\
\hline
0 & 2 & 1 & -\\
\hline
1 & 2 & 11 & 11\\
\hline
2 & 2 & 101 & 101\\
\hline
3 & 2 & 1001 & $7\cdot 11\cdot 13$\\
\hline
4 & 2 & 10001 & $73\cdot137$\\
\hline
5 & 2 & 100001 & $11\cdot 9091$\\
\hline
6 & 2 & 1000001 & $101\cdot 9901$\\
\hline
7 & 2 & 10000001 & $11\cdot 909091$\\
\hline
8 & 2 & 100000001 & $17\cdot 5882353$\\
\hline
9 & 2 & 1000000001 & $7\cdot 11\cdot 13\cdot 19\cdot 52579$\\
\hline
10 & 2 & 10000000001 & $101\cdot 3541\cdot 27961$\\
\hline
11 & 2 & 100000000001 & $11\cdot 23\cdot 4093\cdot 8779$\\
\hline
12 & 2 & 1000000000001 & $73\cdot 137\cdot 99990001$\\
\hline
13 & 2 & 10000000000001 & $11\cdot 859\cdot 1058313049$\\
\hline
14 & 2 & 100000000000001 & $29\cdot 101\cdot 281\cdot 121499449$\\
\hline
15 & 2 & 1000000000000001 & $7\cdot 11\cdot 13\cdot 211\cdot 241\cdot 2161\cdot 9091$\\
\hline
16 & 2 & 10000000000000001 & $353\cdot 449\cdot 641\cdot 1409\cdot 69857$\\
\hline
17 & 2 & 100000000000000001 & $11\cdot 103\cdot 4013\cdot 21993833369$\\
\hline
18 & 2 & 1000000000000000001 & $101\cdot 9901\cdot 999999000001$\\
\hline
19 & 2 & 10000000000000000001 & $11\cdot 909090909090909091$\\
\hline
20 & 2 & 100000000000000000001 & $73\cdot 137\cdot 1676321\cdot 5964848081$\\
\hline
21 & 2 & 1000000000000000000001 & $7\cdot 11\cdot 13\cdot 127\cdot 2689\cdot 459691\cdot 909091$\\
\hline
22 & 2 & 10000000000000000000001 & $89\cdot 101\cdot 1052788969\cdot 1056689261$\\
\hline
23 & 2 & 100000000000000000000001 & $11\cdot 47\cdot 139\cdot 2531\cdot 549797184491917$\\
\hline
24 & 2 & 1000000000000000000000001 & $17\cdot 5882353\cdot 9999999900000001$\\
\hline
25 & 2& 10000000000000000000000001 & $11\cdot 251\cdot 5051\cdot 9091\cdot 78875943472201$\\
\hline
\end{tabular}
\end{center}
\end{sidewaystable}

\subsection{Recreational Application of the Sequence A261544}

The goal of this subsection is to show that the $(r-1)^{st}$ term $b(r-1)$ of the sequence  A261544 divides the $3r-digit$ repetitive number $n$  generated by $g$ with $l(g)=3$. Hence, when a $3-digit$  number $g$ is replicated $r$-times resulting to $n$, dividing $n$ with the prime factors of $b(r-1)$ gives the original number $g$. This result is due to

\begin{theorem}
Let $n=(d_1d_2d_3)_r$ be a repetitive number generated by $g=d_1d_2d_3$ of length $3$. Then there is a finite sequence of divisors $D_i$ such that $n$ upon division by all  of $D_i$ becomes $g$.   
\end{theorem}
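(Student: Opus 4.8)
The plan is to mirror the argument used for \underline{Theorem 3.1}, but now with $r$ copies of the three-digit generator rather than just two. The key observation is a place-value decomposition: since $g=d_1d_2d_3$ occupies exactly three decimal positions, the $j$-th copy of $g$ counting from the right (for $j=0,1,\ldots,r-1$) sits shifted $3j$ places to the left, and hence contributes the integer $g\times 1000^{j}$ to the value of $n$. First I would write $n$ as a column sum of $r$ rows, where the $j$-th row is the integer $g$ followed by $3j$ trailing zeros, namely $g\times 1000^{j}$; adding these rows reproduces the digit string $d_1d_2d_3\,d_1d_2d_3\,\ldots\,d_1d_2d_3$ that defines $n$, exactly as the two-row table did in the proof of \underline{Theorem 3.1}.

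Next, since $g\mid g\times 1000^{j}$ trivially for every $j$, I would invoke the finite extension of \underline{Corollary 2.12} recorded in \underline{Remark 2.13} to conclude that $g$ divides the full sum, i.e. $g\mid n$. Factoring out the common factor $g$ from all $r$ summands then gives
\begin{align*}
n&=g\times\bigl(1+1000+1000^{2}+\cdots+1000^{r-1}\bigr)\\
&=g\times\sum_{k=0}^{r-1}1000^{k}\\
&=g\times b(r-1),
\end{align*}
where the last equality is precisely the definition of the $(r-1)$-st term of the sequence A261544. Finally, I would apply the Fundamental Theorem of Arithmetic (\underline{Theorem 2.18}) to $b(r-1)$: if $b(r-1)$ is prime, the required finite sequence of divisors $D_i$ is $b(r-1)$ itself; otherwise it is the finite list of prime divisors appearing in the factorization of $b(r-1)$. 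Dividing $n$ in succession by these divisors peels off the factor $b(r-1)$ one prime at a time, each division leaving remainder $0$ by \underline{Remark 2.15}, and returns the generator $g$.

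The argument carries no genuine obstacle beyond careful bookkeeping: the only substantive difference from \underline{Theorem 3.1} is that the number of summands is now the general replication number $r$ rather than $2$, so in place of the two-term \underline{Corollary 2.12} I must appeal to its finite-term extension (\underline{Remark 2.13}) to secure $g\mid n$. The one point that genuinely deserves a line of care is verifying the index alignment $\sum_{k=0}^{r-1}1000^{k}=b(r-1)$ against the paper's indexing $b(n)=\sum_{k=0}^{n}1000^{k}$, so that the divisor used is confirmed to be the $(r-1)$-st term of A261544 and not, by an off-by-one slip, the $r$-th term.
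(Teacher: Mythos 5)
Your proposal is correct and follows essentially the same route as the paper's proof: the same decomposition of $n$ into $r$ summands $g\times 1000^{j}$ (the paper writes these as $d_1d_2d_3(0)_{3j}$), the same appeal to the extended divisibility property to get $g\mid n$, the same factorization $n=g\times b(r-1)$, and the same use of the Fundamental Theorem of Arithmetic to produce the finite sequence of prime divisors. The only cosmetic difference is that the paper dispenses with your ``if $b(r-1)$ is prime'' case by citing the known fact that all terms of A261544 beyond the zeroth are composite, whereas you retain both cases as in \underline{Theorem 3.1}; either way the argument is sound.
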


\begin{proof}
Given a repetitive number $n=(d_1d_2d_3)_r$, we express it as a sum of $r$ positive integers both divisible by $g=d_1d_2d_3$. In particular $n$ can be expressed as the sum

\begin{equation*}
n=d_1d_2d_3(0)_{3(r-1)}+d_1d_2d_3(0)_{3(r-2)}+\ldots+d_1d_2d_3(0)_{3(r-r)}.
\end{equation*}

Note that since $g\mid g$ and $g\mid d_1d_2d_3(0)_{3j}$, for $j=1,2,\ldots r-1$, by \underline{Corollary 2.12} we have \begin{center}$g\mid d_1d_2d_3(0)_{3(r-1)}+d_1d_2d_3(0)_{3(r-2)}+\ldots+d_1d_2d_3(0)_{3(r-r)}$.\end{center} So, $g\mid n$.\\

Factoring out the common factor $g$ in all of the summands we have
\begin{align*}
 n&=g\times \Big(1(0)_{3(r-1)}+1(0)_{3(r-2)}+\ldots+1(0)_{3(r-r)}\Big)\\
&=g\times b(r-1).
\end{align*}  

By the Fundamental Theorem of Arithmetic (\underline{Theorem 2.18}), $b(r-1)$ is either a prime or a product of primes. However, we know that (except for the zeroth term) the terms of the sequence A261544 are all composite. So the finite sequence of divisors to be divided to $n$ in order to become $g$ is the finite sequence whose terms are the prime divisors of  $b(r-1)$.       
\end{proof}

The proof of \underline{Theorem 3.2} gives us a method on solving another particular extension of the {\it repetitious number puzzle}.\\
 
{\bf Problem.} Suppose that in the {\it repetitious number puzzle} spectator A was asked to write down any $3-$digit positive integer and replicate it $r-$times. To what sequence of numbers does the resulting $3r-$digit number be divided in order to return to the original $3-$digit number?\\    

{\bf Solution.} Let $g$ be the $3-$digit positive integer and let $n$ be the resulting $3r-$digit number. Note that $n$ is a repetitive number generated by $g$ of length $3$ with replication number $r$. That is, $n=g_r$ with $l(g)=3$. Using the result contained  in the proof of Theorem 3.2, we must divide $n$ by the prime divisors of  $b(r-1)$, the $(r-1)^{st}$ term of the sequence A261544 in order to return to the original number $g$. 

\begin{illustration}
Suppose that spectator $A$ wrote the number $g=721$. Replicating $g$ 4-times gives the number $n=721\ 721\ 721\ 721$. Dividing $n$ by the numbers $7,11,13,101,9\ 091$  the prime divisors of the third term of the sequence A261544 which is $1\ 001\ 001\ 001$ (See Table 2), gives the original number $g=721$.   
\end{illustration}

\begin{sidewaystable}
\begin{center}
\captionof{table}{\bf Prime factorization of the first nine terms of the sequence A261544 (Generated using Wolfram Alpha \cite{WA})}
\begin{tabular}{ |c|c|c|c| }
\hline
Number of Digits ($k$) & Number of Repetitions ($r$) & Terms of the Sequence A261544 & Prime Factorization\\
\hline
3 & 1 & 1 & $-$\\
\hline
3 & 2 & 1001 & $7\cdot11\cdot13$\\
\hline
3 & 3 & 1001001 & $3\cdot333667$\\
\hline
3 & 4 & 1001001001 & $7\cdot11\cdot13\cdot101\cdot9091$\\
\hline
3 & 5 & 1001001001001 & $31\cdot 41\cdot 271\cdot 2906161$\\
\hline
3 & 6 & 1001001001001001 & $3\cdot7\cdot11\cdot13\cdot19\cdot52579\cdot333667$\\
\hline
3 & 7 & 1001001001001001001 & $43\cdot239\cdot1933\cdot4649\cdot10838689$\\
\hline
3 & 8 & 1001001001001001001001 & $7\cdot11\cdot13\cdot73\cdot101\cdot137\cdot9901\cdot99990001$\\
\hline
3 & 9 & 1001001001001001001001001 & $33\cdot757\cdot333667\cdot440334654777631$\\
\hline
\end{tabular}
\end{center}
\end{sidewaystable}

\subsection{Generalized Repetitious Number Puzzle}

In this subsection, we generalize the {\it repetitious number puzzle} by allowing spectator A to write down any $k-$digit number and replicate it $r-$times to generate the integer $n=g_r$ with $l(g)=k$. The generalization is given in the next theorem.

\begin{theorem}
Let $n=(d_1d_2\ldots d_k)_r$ be a repetitive number generated by $g=d_1d_2\ldots d_k$ of length $k$. Then the sequence of prime factors of the integer  
\begin{equation*}
\Big(1(0)_{k-1}\Big)_{r-1}1
\end{equation*}
is a finite sequence such that $n$ upon division by all the sequence terms becomes $g$.   
\end{theorem}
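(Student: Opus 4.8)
The plan is to mirror and unify the arguments already used for Theorem 3.1 (the case $r=2$) and Theorem 3.2 (the case $k=3$): I would factor $n$ as a product $g\times M$, where $M$ is exactly the integer $\Big(1(0)_{k-1}\Big)_{r-1}1$, and then invoke the Fundamental Theorem of Arithmetic (Theorem 2.18) to extract the required finite sequence of prime divisors.

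First I would decompose $n=(d_1d_2\ldots d_k)_r$ into a sum of $r$ positive integers, each a copy of the generator $g$ shifted left by a block of zeros whose length is a multiple of $k$:
\[
n = \sum_{j=0}^{r-1} d_1d_2\ldots d_k\,(0)_{kj}.
\]
Here the $j=0$ summand is $g$ itself and the $j=(r-1)$ summand is $g$ followed by $k(r-1)$ zeros. Since $g\mid g$ and $g\mid d_1d_2\ldots d_k\,(0)_{kj}$ for each $j$, the extended form of Corollary 2.12 described in Remark 2.13, applied to these $r$ multiples of $g$, yields $g\mid n$.

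Next I would factor the common factor $g$ out of every summand. Because $d_1d_2\ldots d_k\,(0)_{kj}$ equals $g\times 10^{kj}$, this gives
\[
n = g\times\Big(1+10^{k}+10^{2k}+\cdots+10^{(r-1)k}\Big) = g\times\sum_{j=0}^{r-1}10^{kj}.
\]
The crux is to recognise that the bracketed sum is precisely $M:=\Big(1(0)_{k-1}\Big)_{r-1}1$. Read positionally, this concatenated digit-string carries a $1$ in each place that is a multiple of $k$, namely places $0,k,2k,\ldots,(r-1)k$, and a $0$ elsewhere; hence $M=\sum_{j=0}^{r-1}10^{kj}$ and therefore $n=g\times M$.

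Finally, with $n=g\times M$ established, the Fundamental Theorem of Arithmetic guarantees that $M$ is either prime or a product of primes. Listing its prime divisors with multiplicity as a finite sequence $D_1,D_2,\ldots,D_m$, successive division of $n$ by $D_1,\ldots,D_m$ cancels the whole factor $M$ and returns the generator $g$. I expect the only real obstacle to be notational rather than mathematical: one must verify carefully that the digit-string $\Big(1(0)_{k-1}\Big)_{r-1}1$ corresponds to the geometric sum $\sum_{j=0}^{r-1}10^{kj}$, and then check that this specialises to $a(k)=10^{k}+1$ when $r=2$ and to $b(r-1)$ when $k=3$, thereby recovering Theorems 3.1 and 3.2 as corollaries of this single statement.
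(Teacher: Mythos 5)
Your proposal is correct and follows essentially the same route as the paper's own proof: the same decomposition of $n$ into $r$ shifted copies of $g$, the same appeal to Corollary 2.12 (via Remark 2.13) to get $g\mid n$, the same factorization $n=g\times\big(1(0)_{k-1}\big)_{r-1}1$, and the same invocation of the Fundamental Theorem of Arithmetic to produce the finite sequence of prime divisors. Your explicit identification of the digit string with the geometric sum $\sum_{j=0}^{r-1}10^{kj}$ is a slightly more careful rendering of a step the paper leaves implicit, but it is the same argument.
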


\begin{proof}
Given a repetitive number $n=(d_1d_2\ldots d_k)_r$, we express it as a sum of $r$ positive integers both divisible by $g=d_1d_2\ldots d_k$. In particular $n$ can be expressed as the sum

\begin{equation*}
n=d_1d_2\ldots d_k(0)_{k(r-1)}+d_1d_2\ldots d_k(0)_{k(r-2)}+\ldots+d_1d_2\ldots d_k(0)_{k(r-r)}.
\end{equation*}

Note that since $g\mid g$ and $g\mid d_1d_2\ldots d_k(0)_{kj}$, for $j=1,2,\ldots r-1$, by \underline{Corollary 2.12} we have \begin{center}$g\mid d_1d_2\ldots d_k(0)_{k(r-1)}+d_1d_2\ldots d_k(0)_{k(r-2)}+\ldots+d_1d_2\ldots d_k(0)_{k(r-r)}$.\end{center} So, $g\mid n$.\\

Factoring out the common factor $g$ in all of the summands we have
\begin{align*}
 n&=g\times \Big(1(0)_{k(r-1)}+1(0)_{k(r-2)}+\ldots+1(0)_{k(r-r)}\Big)\\
&=g\times \Big(1(0)_{k-1}\Big)_{r-1}1.
\end{align*}  

By the Fundamental Theorem of Arithmetic (\underline{Theorem 2.18}), $\Big(1(0)_{k-1}\Big)_{r-1}1$ is either a prime or a product of primes. If $\Big(1(0)_{k-1}\Big)_{r-1}1$  is prime, then the finite sequence of divisors to be divided to $n$ to become $g$ is $\Big(1(0)_{k-1}\Big)_{r-1}1$ itself. If $\Big(1(0)_{k-1}\Big)_{r-1}1$  is non-prime then the finite sequence of divisors to be divided to $n$ to become $g$ is the finite sequence whose terms are the prime divisors of  $\Big(1(0)_{k-1}\Big)_{r-1}1$.        
\end{proof}

\underline{Theorem 3.5} proves the validity of a Grade 7 teacher's clever way in verifying if his students correctly performed a sequence of division.   

\begin{puzzle}
{\bf A Relay Involving Division of Large Numbers.} Sir DELTA is grade 7 mathematics teacher in the Philippines. To test the proficiency of his students on performing division of large numbers, he grouped his students such that each group is consists of 10 members. 
\\

He then instruct the first student which we name S1 to write down in a 1/4 sheet of paper any $4-$digit positive integer (say 2\ 019) and replicate it $8-$ times to get a $32-$digit number ($20\ 192\ 019\ 201\ 920\ 192\ 019\ 201\ 920\ 192\ 019$). Then he asked S1 to give the paper containing the $32-$digit number to S2. S2 then was asked to divide the $32-$digit number by $17$ and write down the answer (1\ 187\ 765\ 835\ 407\ 070\ 118\ 776\ 583\ 540\ 707) in another 1/4 sheet of paper. After S2 was done writing the answer in a 1/4 sheet of paper, Sir Delta asked S2 to give the paper to S3.
\bigskip

Denote by $A_n$ the answer of student n. Suppose that the process continues with the following given (See Table 3):
\begin{itemize}
\item S3 performs $A_2\div 73$
\item S4 performs $A_3\div 137$ 
\item S5 performs $A_4\div 353$
\item S6 performs $A_5\div 449$
\item S7 performs $A_6\div 641$
\item S8 performs $A_7\div 1\ 409$
\item S9 performs $A_8\div 69\ 857$
\item S10 performs $A_9\div 5\ 882\ 353$.
\end{itemize}          

Sir DELTA then asked S10 to give his/her answer to him.
\\ 

If Sir DELTA wants to determine whether his students performed their assigned division problem correctly or not, prove that it is enough for him to ask S1: ``Is this your $4-$digit number?"   
\end{puzzle}

\begin{sidewaystable}
\begin{center}
\captionof{table}{\bf  Some integers of the form $\big(1(0)_{k-1}\big)_{r-1}1$ with their corresponding prime divisors (Generated using Wolfram Alpha \cite{WA}).}
\begin{tabular}{ |c|c|c|c| }
\hline
($j$) & ($r$) & $\big(1(0)_{j-1}\big)_{r-1}1$ & Prime Divisors\\
\hline
1 & 1 & 1 & -\\
\hline
2 & 10 & 1010101010101010101 & (41),(101),(271),(3541),(9091),(27961)\\
\hline
3 & 9 & 1001001001001001001001001 & (3),(3),(757),(333667),(440334654777631)\\
\hline
4 & 8 & 10001000100010001000100010001 & (17),(73),(137),(353),(449),(641),(1409),(69857),(5882353)\\
\hline
5 & 7 & 1000010000100001000010000100001  & (71),(239),(4649),(123551),(102598800232111471)\\
\hline
6 & 6 & 1000001000001000001000001000001 & (3),(19),(101),(9901),(52579),(333667),(999999000001)\\
\hline
7 & 5 & 10000001000000100000010000001 & (41),(71),(271),(123551),(102598800232111471)\\
\hline
8 & 4 & 1000000010000000100000001 & (17),(353),(449),(641),(1409),(69857),(5882353)\\
\hline
9 & 3 & 1000000001000000001 & (3),(757),(440334654777631)\\
\hline
10 & 2 & 10000000001 & (101),(3541),(27961)\\
\hline
\end{tabular}
\end{center}
\end{sidewaystable}

\begin{illustration}
Given below are the correct answers for the assigned sequence of division in \underline{Puzzle 3.6} generated using Wolfram Alpha \cite{WA}.

\small
\begin{equation*}
20\ 192\ 019\ 201\ 920\ 192\ 019\ 201\ 920\ 192\ 019\div 17=1\ 187\ 765\ 835\ 407\ 070\ 118\ 776\ 583\ 540\ 707
\end{equation*}

\begin{equation*}
1\ 187\ 765\ 835\ 407\ 070\ 118\ 776\ 583\ 540\ 707\div 73=16\ 270\ 764\ 868\ 590\ 001\ 627\ 076\ 486\ 859 
\end{equation*}

\begin{equation*}
16\ 270\ 764\ 868\ 590\ 001\ 627\ 076\ 486\ 859\div 137=118\ 764\ 707\ 070\ 000\ 011\ 876\ 470\ 707 
\end{equation*}

\begin{equation*}
118\ 764\ 707\ 070\ 000\ 011\ 876\ 470\ 707\div 353=336\ 443\ 929\ 376\ 770\ 571\ 888\ 019  
\end{equation*}

\begin{equation*}
336\ 443\ 929\ 376\ 770\ 571\ 888\ 019\div 449=749\ 318\ 328\ 233\ 342\ 030\ 931  
\end{equation*}

\begin{equation*}
749\ 318\ 328\ 233\ 342\ 030\ 931\div 641=1\ 168\ 983\ 351\ 378\ 068\ 691  
\end{equation*}

\begin{equation*}
1\ 168\ 983\ 351\ 378\ 068\ 691\div 1\ 409=829\ 654\ 614\ 178\ 899  
\end{equation*}

\begin{equation*}
829\ 654\ 614\ 178\ 899\div 69\ 857=11\ 876\ 470\ 707  
\end{equation*}

\begin{equation*}
11\ 876\ 470\ 707\div 5\ 882\ 353=2\ 019.  
\end{equation*}
\normalsize 
\end{illustration}

\subsection{The $(l,r)$ co-divisor Number and the Family of $(l,r)$ co-divisor Sequences}

We learned from the previous subsection that given a repetitive number  $n=(d_1d_2\ldots d_k)_r$ that is generated by $g=d_1d_2\ldots d_k$ of length $k$, we have

\begin{equation}
n=g\times \Big(1(0)_{k-1}\Big)_{r-1}1.
\label{jr}    
\end{equation}

The number $\Big(1(0)_{k-1}\Big)_{r-1}1$ due to its importance will be named and defined formally in the next definition.

\begin{definition}
Let $k,r\in \mathbb{Z}^+$. The number $\Big(1(0)_{k-1}\Big)_{r-1}1$ in equation (\ref{jr}) is called the {\bf $(l,r)$ co-divisor of $g$ relative to $n$}. 
\end{definition}   

\begin{remark}
The name $(l,r)$ co-divisor number defined on \underline{Definition 3.8} is base from the idea that the number $\Big(1(0)_{k-1}\Big)_{r-1}1$ is dependent to the length $(l)$ of the generator and its replication number $(r)$.   
\end{remark}

\begin{example}
Recall that in the {\it Repetitious Number Puzzle} we have 
\begin{equation*}
394\ 394=394\times 1\ 001.
\end{equation*}
Hence, the $(3,2)$ co-divisor of $394$ relative to $394\ 394$ is $1\ 001$. In general, given any positive integer $g$ of length $3$ when duplicated has the $(l,r)$ co-divisor of $1\ 001$.     
\end{example}

\begin{remark}
To avoid redundancy, we drop the word ``relative to $n$" in determining the $(l,r)$ co-divisor of $g$. This is because the $(l,r)$ co-divisor of an integer $g$ is completely determined by the length of $g$ which is $k$ and the number of replications performed in $g$ which is $r$. 
\end{remark}

\begin{example}
In \underline{Illustration 3.2}, the $(9,2)$ co-divisor of $451\ 220\ 125$ is  $1\ 000\ 000\ 001$. In general, any positive integer $g$ of length $9$ when duplicated has the $(l,r)$ codivisor of $1\ 000\ 000\ 001$.    
\end{example}

\begin{example}
Let $g$ be a $3$-digit positive integer. The $(3,4)$ codivisor of $g$ is $1\ 001\ 001\ 001$. (See \underline{Illustration 3.4})  
\end{example}

\begin{example}
Let $g$ be a $4$-digit positive integer. The $(4,8)$ codivisor of $g$ is the number 
$10\ 001\ 000\ 100\ 010\ 001\ 000\ 100\ 010\ 001$. (See \underline{Puzzle 3.6})  
\end{example}

The concept of $(l,r)$ co-divisor allows us to view the sequence A000533 and the sequence A261544 in the OEIS as a particular member of a family of sequence which we call {\bf $(l,r)$ co-divisor sequences}.  
\bigskip

In particular, if we let $s(k,r)= \big(1(0)_{k-1}\big)_{r-1}1$ we have
\begin{equation*}
s(k,2)=a(k), j=1,2,3,\ldots
\end{equation*}
 where $a(k)$ is the $k^{th}$ term of the sequence  A000533. 
We also have
\begin{equation*}
s(3,r)=b(r-1), r=1,2,3,\ldots.
\end{equation*}
where $b(r-1)$ is the $(r-1)^{st}$ term of the sequence  A261544. 
\bigskip

We end this paper by recommending further studies on the $(l,r)$ co-divisor number and the $(l,r)$ co-divisor sequences and their applications.

\section{Conclusion}

In this article, we discussed the {\it Repetitious Number Puzzle} and its solution. We established that the {\it Repetitious Number Puzzle} is equivalent to the problem: 

\begin{center}
{\it Given a positive integer generator $g$ of length $k$ that is to be replicated $r-$times resulting to the integer $n$ of length $kr$, by what prime numbers must $n$ be divided such that upon dividing $n$ by all of the prime numbers gives back $g$?}      
\end{center}

where the length of $g$ is $3$ and the number of replication $r$ is 2. 
\bigskip

We then provide a generalization to the puzzle by first taking $k\geq 3$. We showed that the solution to the puzzle when $k\geq 3$ is given by the prime divisors of $a(k)$ where $a(k)$ is the $k^{th}$ term of the sequence A000533. Then fixing $k=3$, we consider $r\geq 2$. In this case, we showed that the solution to the puzzle when $k=3$ and $r\geq 2$ is given by the prime divisors of $b(r-1)$ where $b(r-1)$ is the $(r-1)^{st}$ term of the sequence A261544.
\bigskip

For the general case where $k\geq 3$ and $r\geq 2$, we showed that the solution to the puzzle is given by the prime divisors of the $(l,r)$ co-divisor number $\Big(1(0)_{k-1}\Big)_{r-1}1$. 
The concept of $(l,r)$ co-divisor number allowed the possibility to view the sequence A000533 and the sequence A261544 in the OEIS as a particular member of a family of sequences which we call $(l,r)$ co-divisor sequences. Further studies on on the $(l,r)$ co-divisor number and the $(l,r)$ co-divisor sequences and their applications are then recommended.  

\section{Acknowledgment}

The author is thankful to every individual and organizations who are responsible for the creation of this research article. The creation of this article would not be possible without the suggestion of Mr. Melchor A. Cupatan of the Department of Mathematics and Physics Central Luzon State University. The author would also like to express his sincere gratitude to Ms. Josephine Joy V. Tolentino of Philippine Science High School Central Luzon Campus for her motivation and valuable comments  that lead to the improvement of the manuscript. Also, the author thanks the Central Luzon State University for their untarnished effort in encouraging its faculty members to do research. Finally, the author would like to thank the various referees for their valuable comments and suggestions that helps improve the content of the paper.          

\newpage


\end{document}